 \theoremstyle{plain}
\newtheorem{thm}{Theorem}[section]
  \theoremstyle{plain}
 \theoremstyle{definition}
  \theoremstyle{definition}
  \newtheorem{defn}[thm]{Definition}
  \theoremstyle{definition}
  \theoremstyle{definition}
  \theoremstyle{plain}
  \theoremstyle{plain}
  \newtheorem{lem}[thm]{Lemma}
    \theoremstyle{plain}
\renewcommand{\epsilon}{\varepsilon}
\renewcommand{\u}{\textbf{u}}
\newcommand{\R}{\mathbb{R}}
\begin{document}

\title{A Strong Maximum Principle for Parabolic Systems in a Convex Set with
Arbitrary Boundary}

\author{Lawrence Christopher Evans \footnote{Massachusetts Institute of Technology, Department of Mathematics. Email: lcevans@math.mit.edu}}



\date{$\,$}

\maketitle

\begin{abstract} In this paper we prove a strong maximum principle
for certain parabolic systems of equations.  In particular, our methods
place no restriction on the regularity of the boundary of the convex set in
which the system takes its values, and therefore our results hold for any
convex set. We achieve this through the use of viscosity solutions and
their corresponding strong maximum principle.  \end{abstract}

\thispagestyle{empty}

\setcounter{page}{1}

\section*{Introduction}

In this paper we prove a strong maximum principle for
certain parabolic systems.  To be precise, 
let $X\subseteq\R^n$ be open and connected, and set $\Omega =
X\times(0,\infty)$. Suppose $\u\in C^{2,1}(\Omega;\R^k)\cap
C(\bar{\Omega};\R^k)$ satisfies a parabolic system of equations of the form
\begin{eqnarray}
\label{eqn:eq}
\frac{\partial\u}{\partial t}=D(x,t,\u)\sum_{i,j}a_{ij}(x,t)\frac{\partial^2 \u}{\partial x_i\partial
x_j}+\sum_{i}M_{i}(x,t,\u)\frac{\partial \u}{\partial x_i}+\phi(x,t,\u).
\end{eqnarray}
where the $a_{ij}$ are real-valued, $\phi$ takes values in $\R^k$, and $D(x,t,z)$ and each of the $M_i(x,t,z)$ take values in the space of $k\times k$ matrices. 

We make the following regularity assumptions: As a function of $z$, we assume that $\phi(x,t,z)$, $D(x,t,z)$ and each of the $M_i(x,t,z)$ are Lipschitz continuous uniformly for $(x,t)$ in compact subsets of $\Omega $. We assume also that each of the $a_{ij}(x,t)$ is locally bounded in $\Omega$ and that $D(x,t,z)$ and each of the $M_i(x,t,z)$ are locally bounded in $\Omega\times \R^k$. Finally, we assume that the matrix $\{a_{ij}(x,t)\}$ is symmetric and locally uniformly positive definite in $\Omega$ and the matrix $D(x,t,z)$ is locally uniformly positive definite in $\Omega\times \R^k$.

Next, suppose that $K$ is a closed, convex subset of $\R^k$ which is \emph{compatible} with $\phi$, $D$, and the $M_i$ in the following sense: For all $(x,t)\in\Omega$, $v\in\partial K$ and each vector $\nu$ which is inward pointing at $v$ (See \S\,1 for the definition of ``inward pointing at $v$.''), we have that $\phi(x,t,v)\cdot\nu\geq 0$ and that $\nu$ is a left eigenvector of $D(x,t,v)$ and each of the $M_i(x,t,v)$.

We will show that if 
\begin{eqnarray}
\label{eqn:WMP}
\u(x,t)\in K\text{ for all }(x,t)\in\Omega ,
\end{eqnarray}
then
\begin{eqnarray}
\label{eqn:SMP}
\u(x_0,t_0)\in\partial K\text{ for some }
(x_0,t_0)\in\Omega \implies \u(x,t)\in\partial K\text{ for all }(x,t)\in
X\times [0,t_0],
\end{eqnarray}
and we will refer to the implication in (\ref{eqn:SMP}) as {\bf the strong
maximum principle} for solutions to (\ref{eqn:eq}).
Under circumstances when solutions to (\ref{eqn:eq}) satisfy a {\bf weak maximum
principle} of the form
\begin{eqnarray}
\label{WMP}
\u(x,t)\in K\text{ for }(x,t)\in\partial \Omega \implies \u(x,t)\in K\text{
for }(x,t)\in\Omega ,
\end{eqnarray}
it is obvious that (\ref{eqn:WMP}) can be replaced by
$$
\u(x,t)\in K\text{ for } (x,t)\in\partial \Omega.
$$

In his 1975 paper ~\cite{key-6}, H. Weinberger considers the case where $D\equiv I$ and the $M_i$ are real-valued functions. In this case, Weinberger states and proves the strong maximum principle for parabolic systems under a regularity condition on the convex set $K$ which he calls the ``slab condition''. In their 1977 paper ~\cite{key-1}, K. Chueh, C. Conley, and J. Smoller prove, under mild regularity assumptions, a weak maximum principle of the form (\ref{WMP}) for the system of partial differential equations (\ref{eqn:eq}), under the same compatibility condition we have assumed on $K$, $\phi$, $D$, and the $M_i$. In his 1990 paper ~\cite{key-5}, X. Wang, gives a geometric proof of the strong maximum principle for (\ref{eqn:eq}), following Weinberger's arguments, in the case that the boundary of $K$ is $C^2$. In this case, the distance function, $d$, to the boundary of $K$ is $C^2$ in $K$ (at least near the boundary), in which case he proves the strong maximum principle by applying the classical strong maximum principle to $d(\u(x,t))$.

The argument presented in this paper is similar to X. Wang's; we too will apply the classical strong maximum principle to $d(\u(x,t))$. However, we are able to remove all regularity requirements on $\partial K$ by showing that even if $d(\u(x,t))$ is not twice differentiable, it is still a viscosity super solution of the appropriate partial differential equation (See ~\cite{key-2} for an introduction to the theory of viscosity solutions). We can then apply a strong maximum principle for viscosity solutions to parabolic partial differential equations to complete the proof of (\ref{eqn:SMP}). The strong maximum principle we need is provided by F. Da Lio in ~\cite{key-3}, wherein Da Lio extends the classic argument given by L. Nirenberg in ~\cite{key-4} to the case of viscosity solutions.

\section{Proof of (\ref{eqn:SMP})}  

We begin with the following definitions:

\begin{defn}
Given a convex set $K$ and a boundary point $v\in\partial K$, a function $\ell:K\rightarrow \R$ is called a \emph{supporting affine functional of }$K$\emph{ at }$v$ if $\ell(z)\geq 0$ for all $z\in K$, $\ell(v)=0$, and $|\nabla\ell(z)|\equiv|\nabla\ell|=1$ (That is, $\ell(z)$ is the distance to a supporting hyperplane of $K$ at $v$, which, by abusing notation, we also denote by $\ell$). 

We say that a function $\ell:K\rightarrow \R$ is a \emph{supporting affine functional} of $K$ if it is a supporting affine functional of $K$ at some $v\in\partial K$.
\end{defn}
\begin{defn}
For $v\in\partial K$, a vector $\nu$ is an \emph{inward pointing vector} at $v\in\partial K$ if there exists a supporting affine functional $\ell(z)$ at $v$ such that $\nabla\ell=\nu$ (That is, if there exists a supporting hyperplane $\ell$ of $K$ at $v$ whose ``inward pointing'' unit normal vector is $\nu$).
\end{defn}

Note that in our definitions we include the assumption that an inward pointing vector has unit length and that a supporting affine functional has unit-length gradient.  For geometric reasons, these are nice assumptions to make as they allow for the intuitively appealing interpretation of $\ell(z)$ as the distance to a supporting hyperplane $\ell$.

Given $v\in \partial K$, we use $L_v$ to denote the set of all the
supporting affine functionals $\ell$ of $K$ at $v$.  By well known results (e.g.,
the Hahn-Banach Theorem), $L_v\neq\emptyset $ for every $v\in\partial K$. For $z\in K$ we let $d(z)=\inf\{|z-v|:v\in\partial K\}$ denote the distance from $z$ to the boundary of $K$.

First we present a lemma from convex analysis.
\begin{lem}
\label{convexlemma}
For any convex set $K$,
$$
d(z)=\inf_\ell \ell(z),
$$
where the infimum is taken over all supporting affine functionals $\ell$. Furthermore, for each $z\in K$, there is a (possibly non-unique) point $v\in\partial K$ and an $\ell\in L_v$ such that
$$
d(z)=|z-v|=\ell(z).
$$
In fact, $L_v$ consists of just this one supporting affine functional $\ell$.
\end{lem}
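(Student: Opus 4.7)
The plan is to handle the two parts of the lemma separately, starting with the easy inequality $d(z)\le\inf_\ell\ell(z)$ and then constructing the extremal $v$ and $\ell$ that achieve equality, which will automatically force the singleton claim for $L_v$.

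\textbf{The easy direction.} To show $d(z)\le\inf_\ell\ell(z)$, I fix any supporting affine functional $\ell$, written as $\ell(w)=n\cdot(w-v_0)$ at some $v_0\in\partial K$ with $|n|=1$. Along the inward ray $t\mapsto z-tn$, $\ell$ decreases at unit speed: $\ell(z-tn)=\ell(z)-t$. Since $K\subseteq\{w:\ell(w)\ge 0\}$, this ray must exit $K$ by time $t=\ell(z)$. Setting $t^{*}=\sup\{t\ge 0:z-tn\in K\}\le\ell(z)$, closedness of $K$ places $z-t^{*}n\in K$, and the supremum property forces $z-t^{*}n\in\partial K$, so $d(z)\le t^{*}\le\ell(z)$. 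Taking the infimum over $\ell$ gives the bound.

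\textbf{Achieving equality.} For $z\in\partial K$ the claim is trivial (take $v=z$), so assume $z$ lies in the interior of $K$. Since $\partial K$ is nonempty and closed and $v\mapsto|v-z|$ is coercive, a nearest boundary point $v^{*}\in\partial K$ exists; set $\rho=d(z)=|z-v^{*}|>0$. The crucial geometric observation is that $B(z,\rho)$ is contained in the interior of $K$: by the definition of $\rho$ the open ball avoids $\partial K$, and being connected with $z$ in the interior, it lies entirely inside that interior. The supporting hyperplane theorem yields some $\ell\in L_{v^{*}}$, written $\ell(w)=n\cdot(w-v^{*})$ with $|n|=1$. Plugging the test points $w=z-sn\in B(z,\rho)\subseteq K$ (valid for $0\le s<\rho$) into the inequality $\ell(w)\ge 0$ gives $n\cdot(z-v^{*})\ge s$, and letting $s\uparrow\rho$ yields $n\cdot(z-v^{*})\ge\rho$. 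Cauchy--Schwarz supplies the matching upper bound $n\cdot(z-v^{*})\le|n|\,|z-v^{*}|=\rho$, so equality must hold, forcing $n=(z-v^{*})/\rho$. Hence $\ell$ is the unique element of $L_{v^{*}}$, and $\ell(z)=\rho=d(z)$, completing both the identity $d(z)=\inf_\ell\ell(z)$ and the singleton claim.

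\textbf{The main obstacle.} The real technical step is pinning down $\nabla\ell=n$ uniquely, since the supporting hyperplane theorem alone only provides existence of \emph{some} $\ell\in L_{v^{*}}$. The trick is to exploit the whole sphere of test points sitting inside $B(z,\rho)\subseteq K$---a containment that rests on the connectedness argument above---so that the equality case of Cauchy--Schwarz can be invoked to identify $n$ with $(z-v^{*})/\rho$. Once this identification is in hand, both the matching value $\ell(z)=d(z)$ and the singleton property of $L_{v^{*}}$ drop out with no extra work.
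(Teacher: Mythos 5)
Your proof is correct and follows the same overall route as the paper's: establish $d(z)\le\ell(z)$ by traveling along the inward normal until exiting $K$, then pick a nearest boundary point $v$ and show the (unique) supporting functional $\ell\in L_v$ achieves equality. The only real difference is in the equality step, where the paper simply observes that $\ell(z)=\mathrm{dist}(z,\ell)\le|z-v|$ because $v$ lies on the supporting hyperplane (combining with the easy inequality to force equality), whereas you identify the normal direction $n=(z-v)/\rho$ via the equality case of Cauchy--Schwarz applied to test points in the inscribed ball $B(z,\rho)\subseteq K$---a somewhat more computational derivation that lands on the same conclusions, including the singleton property of $L_v$.
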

\begin{proof}
It is easy to see geometrically that $d(z)\leq \ell(z)$ for each $\ell$: Given $\ell$, $\ell(z)=|z-\bar{z}|$, where $\bar{z}$ is the projection of $z$ onto the supporting hyperplane determined by $\ell$. The line from $z$ to $\bar{z}$ intersects $\partial K$ at some point $w$. Thus
$$
d(z)\leq|z-w|\leq|z-\bar{z}|=\ell(z).
$$

Next we show that the infimum over $\ell$ is in fact achieved. Using $B(z,r)$ to denote the closed ball with radius $r$ centered at $z$, we have that $d(z)=\inf\{|z-w|:w\in\partial K\}=\inf\{|z-w|:w\in\partial K\cap B(z,2d(z))\}$. By compactness, this infimum is achieved and we have $d(z)=|z-v|$ for some $v\in\partial K$. 

Take $\ell\in L_v$. We have shown that $d(z)\leq \ell(z)$. Since $v\in \ell$, we also have that $\ell(z)=\text{dist}(z,\ell)\leq |z-v|=d(z)$. And so it follows that $d(z)=\ell(z)$.

So for all $\ell\in L_v$ we have that $\ell(z)=|z-v|$ which means that the line from $z$ to $v$ is normal to the hyperplane $\ell$. As there is only one hyperplane with this normal which touches $v$, we have that $L_v$ is just the singleton $\{\ell\}$.

\end{proof}

In the remainder of the paper we will use the notation $\bar{d}(x,t)$ for $d(\u(x,t))$ and $\bar{\ell}(x,t)$ for $\ell(\u(x,t))$. The following is the key result proved in this section.

\begin{thm}
\label{thm:visc} 
$\bar{d}(x,t)$ satisfies, in the viscosity sense, a parabolic equation of the form
$$
\frac{\partial \bar{d}}{\partial t}-\sum_{i,j}\alpha_{ij}(x,t)\frac{\partial^2\bar{d}}{\partial x_i \partial x_j}-\sum_i \beta_i(x,t)\frac{\partial \bar{d}}{\partial x_i}+\gamma(x,t)\bar{d}\geq 0,
$$
where $\gamma(x,t)\geq 0$ in $\Omega$, each of the $\alpha_{ij},\beta_i,$ and $\gamma$ are locally bounded in $\Omega$, and $\{\alpha_{ij}\}$ is locally uniformly positive definite in $\Omega$.
\end{thm}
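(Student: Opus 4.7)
The plan is to verify the viscosity supersolution property directly from the definition. Fix a smooth test function $\varphi$ such that $\bar{d}-\varphi$ attains a local minimum of zero at some interior point $(x_0,t_0)\in\Omega$; I must show the proposed parabolic inequality holds for $\varphi$ at $(x_0,t_0)$. The crucial move is to dominate $\bar{d}$ from above by a smooth object: apply Lemma~\ref{convexlemma} to $\u(x_0,t_0)$ to get a closest boundary point $v\in\partial K$ and its unique supporting affine functional $\ell\in L_v$, and set $\bar{\ell}(x,t):=\ell(\u(x,t))$. Since $\ell$ is affine and $\u\in C^{2,1}$, the function $\bar{\ell}$ is $C^{2,1}$, satisfies $\bar{\ell}(x_0,t_0)=\bar{d}(x_0,t_0)$, and dominates $\bar{d}$ everywhere. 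Consequently $\bar{\ell}\geq\bar{d}\geq\varphi$ near $(x_0,t_0)$ with equality at the point, so $\bar{\ell}-\varphi$ attains a local minimum zero at $(x_0,t_0)$, yielding $\partial_t\bar{\ell}=\partial_t\varphi$, $\nabla_x\bar{\ell}=\nabla_x\varphi$, and the matrix inequality $D_x^2\bar{\ell}\geq D_x^2\varphi$ there.

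Next I would derive a pointwise inequality for $\bar{\ell}$. Differentiating $\bar{\ell}=\ell(\u)$ via the chain rule (with $\nabla\ell=\nu$ and $\nabla^2\ell=0$) and substituting (\ref{eqn:eq}),
$$
\partial_t\bar{\ell} \;=\; \nu^T D(x_0,t_0,\u)\sum_{i,j}a_{ij}\partial^2_{ij}\u \;+\; \sum_i\nu^T M_i(x_0,t_0,\u)\partial_i\u \;+\; \nu\cdot\phi(x_0,t_0,\u).
$$
Since $\nu$ is inward pointing at $v$, the compatibility hypothesis supplies real eigenvalues $\lambda,\mu_i$ with $\nu^T D(x_0,t_0,v)=\lambda\nu^T$, $\nu^T M_i(x_0,t_0,v)=\mu_i\nu^T$, and $\nu\cdot\phi(x_0,t_0,v)\geq 0$; local uniform positive definiteness of $D$ forces $\lambda=\nu^T D(x_0,t_0,v)\nu$ to be bounded below by a positive constant on compact subsets of $\Omega$. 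The $D, M_i, \phi$ above are evaluated at $\u(x_0,t_0)$ rather than at $v$, but Lipschitz continuity in $z$ combined with $|\u(x_0,t_0)-v|=\bar{d}(x_0,t_0)$ lets me replace them by their values at $v$ at the cost of remainders bounded by $C\bar{d}(x_0,t_0)$, where $C$ depends only on locally bounded quantities ($|a_{ij}|$, $|\nabla_x\u|$, $|D_x^2\u|$, operator norms of the matrices, and the Lipschitz constants). Using $\nu\cdot\partial^2_{ij}\u=\partial^2_{ij}\bar{\ell}$ and $\nu\cdot\partial_i\u=\partial_i\bar{\ell}$, this rearranges to
$$
\partial_t\bar{\ell} - \lambda\sum_{i,j}a_{ij}\partial^2_{ij}\bar{\ell} - \sum_i\mu_i\partial_i\bar{\ell} + C\bar{d} \;\geq\; 0 \qquad\text{at }(x_0,t_0).
$$

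Finally I would transfer the derivatives from $\bar{\ell}$ to $\varphi$ using the touching relations. The first-order terms match identically; for the second-order term, since $\lambda\{a_{ij}\}$ is positive semi-definite and $D_x^2\bar{\ell}\geq D_x^2\varphi$, the standard trace inequality gives $\sum \lambda a_{ij}\partial^2_{ij}\bar{\ell}\geq\sum \lambda a_{ij}\partial^2_{ij}\varphi$, so the inequality passes to $\varphi$. Setting $\alpha_{ij}(x,t):=\lambda(x,t)a_{ij}(x,t)$, $\beta_i(x,t):=\mu_i(x,t)$, and $\gamma(x,t):=C(x,t)$ via a measurable selection of the closest boundary point $v(x,t)$ yields the required viscosity supersolution inequality; positive definiteness of $\{\alpha_{ij}\}$ follows from the uniform positive lower bound on $\lambda$ and that of $\{a_{ij}\}$, and $\gamma\geq 0$ is immediate from its construction as an upper bound on magnitudes. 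The main obstacle I anticipate is the middle step: compatibility is stated only at $v\in\partial K$, while the PDE coefficients are evaluated at $\u(x_0,t_0)\in K$, and every Lipschitz discrepancy must be carefully packaged into a single locally bounded $\gamma$ multiplying $\bar{d}$. A minor subtlety is the case $\u(x_0,t_0)\in\partial K$ (where $\bar{d}=0$ and $v=\u(x_0,t_0)$), in which $L_v$ may fail to be a singleton; any choice of $\ell\in L_{\u(x_0,t_0)}$ suffices since the compatibility hypothesis applies to every inward-pointing $\nu$ at $v$.
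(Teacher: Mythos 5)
Your proposal is correct and follows essentially the same route as the paper: dominate $\bar{d}$ from above by the smooth surrogate $\bar{\ell}=\ell(\u)$ at the touching point, apply the chain rule and the PDE, use the eigenvector/inward-pointing compatibility conditions and Lipschitz estimates to produce a scalar parabolic inequality for $\bar{\ell}$, and then transfer derivatives to the test function via the matrix inequality. The only organizational difference is that the paper fixes the selection $v_{(x,t)}$ (lexicographically, to avoid choice) \emph{before} running the viscosity argument so that the coefficients $\tilde{\mu},\tilde{\lambda}_i$ are well-defined functions and the $v$ used at the test point agrees with the selected one; you should do the same rather than picking the selection at the end.
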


\begin{proof}
Call a quadruple $(x,t,v,\ell)$ \emph{nice} if 
\begin{enumerate}
\item $(x,t)\in\Omega$, $v\in\partial K$, and $\ell$ is a supporting affine functional of $K$ at $v$.
\item $d(\u(x,t))=|\u(x,t)-v|$.
\end{enumerate} 

By the previous lemma, we know that for each $(x,t)$ we can find at least one $v\in\partial K$ satisfying the second condition and this in turn will determine a unique choice of $\ell$ satisfying the first condition. We then also have that $\ell(\u(x,t))=|\u(x,t)-v|$. 

For any nice quadruple we have that

\begin{eqnarray*}
\bar{\ell}_t&=&\nabla \ell\cdot\u_t\\
                 &=&\nabla \ell\cdot[D(x,t,\u)\sum_{i,j}a_{ij}(x,t)\u_{x_ix_j}+\sum_{i}M_{i}(x,t,\u)\u_{x_i}+\phi(x,t,\u)]\\
                 &=&\nabla \ell\cdot[D(x,t,\u)\sum_{i,j}a_{ij}(x,t)\u_{x_ix_j}+\sum_{i}M_{i}(x,t,\u)\u_{x_i}+\phi(x,t,\u)]\\
                 & &-\nabla \ell\cdot[D(x,t,v)\sum_{i,j}a_{ij}(x,t)\u_{x_ix_j}+\sum_{i}M_{i}(x,t,v)\u_{x_i}+\phi(x,t,v)]\\
                 & &+\nabla \ell\cdot[D(x,t,v)\sum_{i,j}a_{ij}(x,t)\u_{x_ix_j}+\sum_{i}M_{i}(x,t,v)\u_{x_i}+\phi(x,t,v)]\\
                 &\geq&-\left[c(x,t)\|\sum_{i,j}a_{ij}(x,t)\u_{x_ix_j}\|+\sum_i m_i(x,t)\left\|\u_{x_i}\right\|+p(x,t)\right]|\u-v|\\
                 & &+\mu(x,t,v,\nabla \ell)\sum_{i,j}a_{ij}(x,t)\left( \nabla\ell\cdot \u_{x_ix_j}\right)+\sum_i\lambda_i(x,t,v,\nabla\ell)\left( \nabla\ell\cdot \u_{x_i}\right)+0\\
                 &=&-\left[c(x,t)\|\sum_{i,j}a_{ij}(x,t)\u_{x_ix_j}\|+\sum_i m_i(x,t)\left\|\u_{x_i}\right\|+p(x,t)\right]\bar{\ell}\\
                 & &+\mu(x,t,v,\nabla \ell)\sum_{i,j}a_{ij}(x,t)\bar{\ell}_{x_ix_j}+\sum_i\lambda_i(x,t,v,\nabla\ell)\bar{\ell}_{x_i},
\end{eqnarray*}
where $c(x,t)$, $m_i(x,t)$, and $p(x,t)$ are the Lipschitz constants for $D(x,t,\cdot)$, $M_i(x,t,\cdot)$, and $\phi(x,t,\cdot)$, respectively, and $\mu$ and $\lambda_i$ are the eigenvalues for $D$ and $M_i$ at $v$ corresponding to the left eigenvector $\nabla \ell$ (recall that $\nabla \ell$ is inward pointing at $v$). Note that in the inequality step we have used the fact that $\phi(x,t,v)\cdot\nabla\ell\geq 0$ which follows from the fact that $\nabla\ell$ is inward pointing at $v$ and from our compatibility assumption on $\phi$ and $K$.

Next, let $\gamma(x,t)=\left[c(x,t)\|\sum_{i,j}a_{ij}(x,t)\u_{x_ix_j}\|+\sum_i m_i(x,t)\left\|\u_{x_i}\right\|+p(x,t)\right]$. It follows from our regularity assumptions on $D$, $M_i$, and $\phi$ that $c(x,t)$, $m_i(x,t)$, and $p(x,t)$ are locally bounded. Since $\u\in C^{2,1}(\Omega)$, its first and second spatial derivatives are locally bounded, and so, since the $a_{ij}(x,t)$ are locally bounded, it follows that $\gamma(x,t)$ is locally bounded. As each of the Lipschitz constants is positive, it is clear that $\gamma(x,t)\geq 0$.

Thus we have shown that for each nice quadruple $(x,t,v,\ell)$,
\begin{eqnarray}
\label{ellPDE}
\bar{\ell}_t-\mu(x,t,v,\nabla \ell)\sum_{i,j}a_{ij}(x,t)\bar{\ell}_{x_ix_j}-\sum_i\lambda_i(x,t,v,\nabla\ell)\bar{\ell}_{x_i}+\gamma(x,t)\bar{\ell}\geq 0.
\end{eqnarray}

As remarked at the outset of this proof, given $(x,t)\in\Omega$ we can always find a $v\in\partial K$ and $\ell$ to form a nice quadruple $(x,t,v,\ell)$. In general there will be more than one way to extend $(x,t)$ into a nice quadruple, but for our purposes we only care that an extension is possible.

To avoid the axiom of choice, we now describe a method of choosing an extension: Given $(x,t)\in\Omega$ we need that $v\in\partial K$ be such that $|\u(x,t)-v|=d(\u(x,t))$. The set of $v$ which satisfy this relation is a closed and bounded and hence compact subset of $\R^k$. We first look at $v$ in this set with smallest first component. If there is a unique such $v$ we choose it. Otherwise, among those $v$ we look for the one with smallest second component and we continue this algorithm until we find a unique $v$. We denote this $v$ by $v_{(x,t)}$ to make clear its dependence on $(x,t)$. Lastly, once we have $v$, $\ell$ is uniquely determined (see Lemma \ref{convexlemma}) and we denote it by $\ell_{(x,t)}$.

Next we let $\tilde{\mu}(x,t)=\mu(x,t,v_{(x,t)},\nabla\ell_{(x,t)})$ and $\tilde{\lambda_i}(x,t)=\lambda_i(x,t,v_{(x,t)},\nabla\ell_{(x,t)})$. We claim that $\tilde{\mu}(x,t)$ and the $\tilde{\lambda}_i(x,t)$ are locally bounded. This is true as for any compact set $C\subset \Omega$,
\begin{eqnarray*}
\sup_{(x,t)\in C}\tilde{\mu}(x,t)&\leq& \sup_{(x,t)\in C}\sup_{\{v:|\u-v|=d(\u)\}}\mu(x,t,v,\nabla\ell)\\
                                 &\leq& \sup_{(x,t)\in C}\sup_{\{v:|\u-v|=d(\u)\}}\|D(x,t,v)\|<\infty,
\end{eqnarray*}
where $\|D\|$ denotes the operator norm of $D$ which is locally bounded by our regularity assumption on $D$. The same argument works for the $\tilde{\lambda}_i$.

We next claim that $\tilde{\mu}(x,t)$ is uniformly bounded away from $0$ on compact sets. This is true as for any compact set $C\subset \Omega$,
\begin{eqnarray*}
\inf_{(x,t)\in C}\tilde{\mu}(x,t)&\geq& \inf_{(x,t)\in C}\inf_{\{v:|\u-v|=d(\u)\}}\mu(x,t,v,\nabla\ell)\\
                                 &\geq& \inf_{(x,t)\in C}\inf_{\{v:|\u-v|=d(\u)\}}\Lambda_1(D(x,t,v))>0,
\end{eqnarray*}
where we denote by $\Lambda_1(D)$ the smallest eigenvalue of the positive definite matrix $D$. Here we have used our assumption that $D(x,t,v)$ is uniformly positive definite on compact sets.

Finally we reach the crux of our argument. We claim that $\bar{d}$ solves
$$
\bar{d}_t-\tilde{\mu}(x,t)\sum_{i,j}a_{ij}(x,t)\bar{d}_{x_ix_j}-\sum_i\tilde{\lambda}_i(x,t)\bar{d}_{x_i}+\gamma(x,t)\geq 0
$$
in the viscosity sense.

Suppose that $\psi\in C^\infty(\Omega)$ touches $\bar{d}$ from below at the point $(\hat{x},\hat{t})$, i.e. 
$$
\psi(\hat{x},\hat{t})=\bar{d}(\hat{x},\hat{t}),\text{  and }\psi(x,t)\leq \bar{d}(x,t)\text{  in a neighborhood of }(\hat{x},\hat{t}).
$$
We extend the point $(\hat{x},\hat{t})$ to the nice quadruple $(\hat{x},\hat{t},v_{(\hat{x},\hat{t})},\ell_{(\hat{x},\hat{t})})$. Since by Lemma \ref{convexlemma}, $d(z)\leq \ell_{(\hat{x},\hat{t})}(z)$, we then have that
$$
\psi(\hat{x},\hat{t})=\bar{\ell}_{(\hat{x},\hat{t})}(\hat{x},\hat{t}),\text{  and }\psi(x,t)\leq \bar{\ell}_{(\hat{x},\hat{t})}(x,t)\text{  in a neighborhood of }(\hat{x},\hat{t}).
$$
Therefore, $\psi$ also touches the function $\bar{\ell}_{(\hat{x},\hat{t})}$ from below at $(\hat{x},\hat{t})$, and so 
$$
\frac{\partial}{\partial t}\psi=\frac{\partial}{\partial t}\bar{\ell}_{(\hat{x},\hat{t})}(\hat{x},\hat{t}),\text{  and }\Delta_x\psi\leq\Delta_x\bar{\ell}_{(\hat{x},\hat{t})}(\hat{x},\hat{t}).
$$ 
It then follows from (\ref{ellPDE}) (and recalling the definitions of $\tilde{\mu}$ and $\tilde{\lambda_i}$) that
$$
\psi_t-\tilde{\mu}(x,t)\sum_{i,j}a_{ij}(x,t)\psi_{x_ix_j}-\sum_i\tilde{\lambda}_i(x,t)\psi_{x_i}+\gamma(x,t)\psi\geq 0
$$
Since this is true for all points $(\hat{x},\hat{t})$ and all smooth functions $\psi$ touching $\bar{d}$ from below at $(\hat{x},\hat{t})$, we have shown that $\bar{d}(x,t)$ solves
$$
\bar{d}_t-\tilde{\mu}(x,t)\sum_{i,j}a_{ij}(x,t)\bar{d}_{x_ix_j}-\sum_i\tilde{\lambda}_i(x,t)\bar{d}_{x_i}+\gamma(x,t)\bar{d}\geq 0
$$
in the viscosity sense.

The theorem is now proved by letting $\beta_i=\tilde{\lambda}_i$ and $\alpha_{ij}=\tilde{\mu}a_{ij}$. Note that $\{\alpha_{ij}\}$ is uniformly positive definite on compact sets as $\{a_{ij}\}$ is uniformly positive definite on compact sets and $\tilde{\mu}$ is uniformly bounded away from $0$ on compact sets.

\end{proof}

In order to complete the proof of (\ref{eqn:SMP}), we need the following version of the strong maximum principle for supersolutions in the viscosity
sense:
\begin{thm}
\label{thm:vSMP}  
Suppose that $f\in C\bigl(\bar{\Omega} ;[0,\infty
)\bigr)$ satisfies 
$$
\frac{\partial f}{\partial t}-\sum_{i,j}\alpha_{ij}(x,t)\frac{\partial^2 f}{\partial x_i \partial x_j}-\sum_i \beta_i(x,t)\frac{\partial f}{\partial x_i}+\gamma(x,t)f\geq 0
$$
in the viscosity sense.  Suppose further that in $\Omega$, $\gamma \geq 0$, each of the $\alpha_{ij},\beta_i,$ and $\gamma$ are locally bounded, and $\{\alpha_{ij}\}$ is locally uniformly positive definite.  Then
$$
f(x_0,t_0)=0\text{ for some } (x_0,t_0)\in\Omega \implies f(x,t)=0\text{ for all }
(x,t)\in\bar{\Omega} \text{ with } t\leq t_0.
$$
\end{thm}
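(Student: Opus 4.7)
The plan is to invoke directly the viscosity strong maximum principle of Da Lio~\cite{key-3}, which is the parabolic-viscosity analog of Nirenberg's classical argument~\cite{key-4}. That result says, roughly, that a continuous nonnegative viscosity supersolution of a linear second-order parabolic operator with locally bounded coefficients, locally uniformly positive definite principal part, and nonnegative zero-order coefficient, which vanishes at an interior point of a cylindrical domain, must vanish throughout the parabolic component of that point. In other words, all of the work in this section has been done in pushing the geometry onto the viscosity inequality in Theorem~\ref{thm:visc}; what remains is essentially a matter of applying an off-the-shelf result.

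First I would check that the structural hypotheses line up with Da Lio's formulation: $\alpha_{ij}$, $\beta_i$, $\gamma$ are locally bounded in $\Omega$; $\{\alpha_{ij}\}$ is locally uniformly positive definite; $\gamma\geq 0$; and $f$ is continuous on $\bar{\Omega}$, nonnegative, and a viscosity supersolution of the given inequality. The role of the sign assumption $\gamma\geq 0$ is the usual one in Nirenberg-type arguments: combined with $f\geq 0$, it ensures that the zero-order term never spoils the propagation of an interior zero of $f$, since at any candidate minimum one has $\gamma f\geq 0$.

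Having verified the hypotheses, Da Lio's theorem immediately yields $f\equiv 0$ on the parabolic component of $(x_0,t_0)$ inside $\Omega$. Because $X$ is open and connected and $\Omega=X\times(0,\infty)$, that component is exactly $X\times(0,t_0]$: one first spreads the vanishing across the time slice $X\times\{t_0\}$ using the spatial connectedness of $X$, and then propagates backward in time to every $t\in(0,t_0]$. Continuity of $f$ on $\bar{\Omega}$ then extends the conclusion to $\bar{X}\times[0,t_0]=\{(x,t)\in\bar{\Omega}:t\leq t_0\}$, as required. The only real potential obstacle is bookkeeping: confirming that the exact regularity Da Lio assumes on the coefficients (e.g.\ measurability versus continuity, or exactly how local boundedness enters) matches what we have here. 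In the linear setting with locally bounded coefficients and a locally uniformly elliptic principal part, this check is routine and no additional estimate is needed.
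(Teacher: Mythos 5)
Your overall strategy coincides with the paper's: reduce the theorem to Da Lio's viscosity-solutions analogue of Nirenberg's strong maximum principle~\cite{key-3}, after which the propagation of the zero set across $X\times\{t_0\}$ and backward in time is standard. The paper frames this by passing to $-f$ (a subsolution with an interior nonnegative maximum), but that is a cosmetic difference.

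There is, however, a genuine gap in the step you dismiss as ``routine bookkeeping.'' Da Lio's theorem is stated for operators $G(x,t,f,f_t,D_xf,D^2_{xt}f)$ that are \emph{continuous}, whereas the coefficients $\alpha_{ij},\beta_i,\gamma$ produced in Theorem~\ref{thm:visc} are only locally bounded (indeed $\tilde{\mu}$ and $\tilde{\lambda}_i$ come from a pointwise selection of a minimizing boundary point $v_{(x,t)}$ and its supporting hyperplane, and there is no reason this selection should vary continuously). So the hypotheses of Da Lio's theorem are \emph{not} satisfied here, and one cannot invoke it off the shelf. The paper explicitly flags this: the PDE ``does not directly fall under the assumptions of Da Lio's result,'' and what is actually used is that Da Lio's \emph{method} (her adaptation of Nirenberg's barrier argument to the viscosity setting) can be re-run for this particular linear operator, exploiting its simple form. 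Your assertion that ``this check is routine and no additional estimate is needed'' asserts precisely the thing that the paper is at pains to qualify; as written, your proof would cite a theorem whose hypotheses fail. To fix it, you should either reproduce Da Lio's proof for a linear parabolic operator with merely bounded measurable coefficients (uniformly elliptic locally), or cite a version of the viscosity strong maximum principle that explicitly allows discontinuous coefficients, rather than Da Lio's statement as given.
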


Note that by applying this theorem to $f=\bar{d}\ge 0$ and using
Theorem \ref{thm:visc}, we get (\ref{eqn:SMP}) as an immediate consequence.

\begin{proof} 
By linearity, we have that $-f$ solves
\begin{eqnarray}\label{eqn:star}
\frac{\partial f}{\partial t}-\sum_{i,j}\alpha_{ij}(x,t)\frac{\partial^2 f}{\partial x_i \partial x_j}-\sum_i \beta_i(x,t)\frac{\partial f}{\partial x_i}+\gamma(x,t)f\leq 0
\end{eqnarray}
in the viscosity sense, and by our non-negativity assumption on $f$, we have that $-f$ achieves a non-negative maximum at $(x_0,t_0)$. If $-f\in C^{2,1}(\Omega;[0,\infty))$ were a classical solution to (\ref{eqn:star}), it would immediately follow from the classical strong maximum principle that $-f\equiv f\equiv 0$ for all $(x,t)\in\Omega$ with $t\leq t_0$.

As $-f$ only solves (\ref{eqn:star}) in the viscosity sense, we instead rely on the extension of the proof of the classical strong maximum principle given by Nirenberg in ~\cite{key-4} to the case of viscosity solutions, provided by F. Da Lio in ~\cite{key-3}. Da Lio's assumptions are more general than we need and apply to viscosity solutions of more general PDE of the form
$$
G(x,t,f,f_t,D_x f,D^2_{xt} f)\leq 0
$$
under the assumption that $G$ is continuous. We don't have continuity for the coefficients of our PDE (\ref{eqn:star}), and so our PDE does not directly fall under the assumptions of Da Lio's result. Nevertheless, because our PDE (\ref{eqn:star}) is of such a simple form, Da Lio's method of extending Nirenberg's proof can still be applied to our PDE, and we achieve the same result that $-f\equiv f\equiv 0$ for all $(x,t)\in\Omega$ with $t\leq t_0$.
\end{proof}

\section{Acknowledgments}  
I thank my advisor, Professor Daniel W. Stroock, for his numerous helpful conversations and suggestions. I also thank Professor David Jerison for his helpful advice.

\end{document}